\newtheorem{theorem}{Theorem}
\newtheorem{lemma}{Lemma}
\newtheorem{definition}{Definition}
\newenvironment{proof}{\begin{trivlist} \item[\hskip\labelsep{\it Proof.}]}{$\hfill\Box$\end{trivlist}}
\newcommand{\rd}{\,\mathrm{d}}
\newcommand{\bsj}{\boldsymbol{j}}
\newcommand{\bsx}{\boldsymbol{x}}
\newcommand{\bsm}{\boldsymbol{m}}
\newcommand{\bst}{\boldsymbol{t}}
\newcommand{\bssigma}{\boldsymbol{\sigma}}
\newcommand{\bszero}{\boldsymbol{0}}
\newcommand{\RR}{\mathbb{R}}
\newcommand{\LL}{{\cal L}}
\newcommand{\FF}{\mathbb{F}}
\newcommand{\NN}{\mathbb{N}}
\newcommand{\cP}{\mathcal{P}}
\newcommand{\qed} {\hfill \Box \vspace{0.5cm}} 
\title{Optimal order of $L_p$-discrepancy of digit shifted Hammersley point sets in dimension 2}
\author{Aicke Hinrichs, Ralph Kritzinger and Friedrich Pillichshammer\thanks{R. Kritzinger and F. Pillichshammer are supported by the Austrian Science Fund (FWF): Project F5509-N26, which is a part of the Special Research Program "Quasi-Monte Carlo Methods: Theory and Applications".}}
\date{}
\begin{document}

\maketitle

\begin{abstract}
It is well known that the two-dimensional Hammersley point set consisting of $N=2^n$ elements (also known as Roth net) does not have optimal order of $L_p$-discrepancy for $p \in (1,\infty)$ in the sense of the lower bounds according to Roth (for $p \in [2,\infty)$) and Schmidt (for $p \in (1,2)$). On the other hand, it is also known that slight modifications of the Hammersley point set can lead to the optimal order $\sqrt{\log N}/N$ of $L_2$-discrepancy, where $N$ is the number of points. Among these are for example digit shifts or the symmetrization. In this paper we show that these modified Hammersley point sets also achieve optimal order of $L_p$-discrepancy for all $p \in (1,\infty)$.  
\end{abstract}

\centerline{\begin{minipage}[hc]{110mm}{
{\em Keywords:} $L_p$-discrepancy, Hammersley point set, Roth net, digit shifts\\
{\em MSC 2000:} 11K38, 11K31}
\end{minipage}}

\section{Introduction}

For a finite set $\cP_{N,s} =\{\bsx_0,\ldots
,\bsx_{N-1}\}$ of points in the $s$-dimensional unit-cube $[0,1)^s$
the {\it local discrepancy} is defined as
$$D_N(\cP_{N,s},\bst)=\frac{A_N([\bszero,\bst),\cP_{N,s})}{N}- t_1 t_2 \cdots t_s,$$ where $\bst = (t_1,t_2,\ldots, t_s) \in [0,1]^s$ and
$A_N([\bszero,\bst), \cP_{N,s})$ denotes the number of indices $k$ with
$\bsx_k \in [0,t_1)\times \ldots \times [0,t_s) =: [\bszero, \bst)$. The local discrepancy measures the difference of the portion of points in an axis parallel box containing the origin and the volume of this box. Hence it is a measure of the irregularity of distribution of a point set in $[0,1)^s$.

\begin{definition}
Let $p \in [1,\infty]$. The {\it $L_p$-discrepancy} of $\cP_{N,s}$ is defined as the $L_p$-norm of the local discrepancy
\begin{eqnarray}\label{Lq-definition}
L_{p,N}(\cP_{N,s})=\| D_N(\cP_{N,s},\cdot)\|_{L_p}= \left(\int_{[0,1]^{s}} |D_N(\cP_{N,s},\bst)|^p \rd\bst\right)^{1/p}
\end{eqnarray}
with the obvious modifications for $p=\infty$. 
\end{definition}

The $L_p$-discrepancy can also be linked to the integration error of a quasi-Monte Carlo rule, see, e.g. \cite{DP10,Nie73,SloWo} for the error in the worst-case setting and \cite{Woz} for the average case setting.

One of the questions on irregularities of distribution is concerned with the precise order of convergence of the smallest possible values of $L_p$-discrepancy as $N$ goes to infinity.

In this paper we only deal with the case $s=2$ and $p \in (1,\infty)$ and consider modifications of the {\it two-dimensional Hammersley point set} with $N=2^n$ elements (also known as {\it Roth net}) given by  
\begin{equation}\label{defHam}
{\cal R}_n=\Big\{ \Big( \frac{t_n}{2}+\frac{t_{n-1}}{2^2}+\cdots + \frac{t_1}{2^n} ,  
                       \frac{t_{1}}{2}+\frac{t_{2}}{2^2}  +\cdots + \frac{t_n}{2^n} \Big) \ : \ 
                       t_1,\ldots,t_n \in \{0,1\}   \Big\}.
\end{equation}
It is well known (see, for example, \cite[Corollary~1]{P2002}) that for all $p \in [1,\infty)$ we have $$\lim_{N \rightarrow \infty} \frac{N L_{p,N}({\cal R}_n)}{\log N}=\frac{1}{8 \log 2},$$ where here and throughout the paper $\log$ denotes the natural logarithm. Hence the two-dimensional Hammersley point set does not have optimal order of $L_p$-discrepancy with respect to the general lower bound by Roth (for $p \in [2,\infty)$) and Schmidt (for $p \in (1,2)$), see Section~\ref{sec2}.

In this paper we consider digit shifted Hammersley point sets (Section~\ref{sec4}) and symmetrized digit shifted Hammersley point sets (Section~\ref{sec5}) and show that for both cases we can achieve the optimal order of $L_p$-discrepancy for all $p \in (1,\infty)$ (see Theorem~\ref{thm1}, \ref{thm2} and \ref{thm3}). The optimality of the
digit shifted Hammersley point sets for the $L_p$-discrepancy was recently shown by Markhasin \cite{Lev13d,Lev13}. The proof there is indirect via optimality of the norm of
the discrepancy function in Besov spaces with dominating mixed smoothness together with embedding theorems between Besov spaces and Triebel-Lizorkin spaces
which contain $L_p$-spaces as special cases. The main tool there is the computation of Haar coefficients of the discrepancy function which, for the
digit shifted Hammersley point sets, were already computed in \cite{hin2010}. We give a direct proof here via Littlewood-Paley theory which is accessible without
knowledge of function space theory. 

\paragraph{Notation.} Throughout the paper we use the following notation. For functions $f,g:\NN \rightarrow \RR^+$ we write $g(N) \ll f(N)$ (or $g(N) \gg f(N)$), 
if there exists a $C>0$ such that $g(N) \le C f(N)$ (or $g(N) \ge C f(N)$) for all $N \in \NN$, $N \ge 2$. If we would like to stress that the quantity $C$ may also depend on other variables than $N$, say $\alpha_1,\ldots,\alpha_w$, 
this will be indicated by writing $\ll_{\alpha_1,\ldots,\alpha_w}$ (or $\gg_{\alpha_1,\ldots,\alpha_w}$). Sometimes we also use $f(N) \asymp g(N)$ which means that $f(N) \ll g(N)$ and $f(N) \gg g(N)$ simultaneously.\\

Before we continue we survey some known results from discrepancy theory:

\section{A brief survey of known results}\label{sec2}

In 1954 Roth~\cite{Roth} proved that for every $N$-element point set $\cP_{N,s}$ in $[0,1)^s$ we have 
\begin{equation}\label{bd_roth}
L_{2,N}(\cP_{N,s}) \gg_s \frac{(\log N)^{\frac{s-1}{2}}}{N}.
\end{equation} 
Roth's original proof can be found in \cite{Roth}. Further proofs are presented in \cite{BC,DP10,DT97,HM,kuinie,mat}. According to  a result of Hinrichs and Markhasin \cite{HM} the implied constant $c_s$ can be chosen as $$c_s= \frac{7}{27 \cdot 2^{2s-1} (\log 2)^{(s-1)/2} \sqrt{(s-1)!}}.$$ From the monotonicity of the $L_p$-norm it is evident that Roth's lower bound \eqref{bd_roth} also holds for the $L_p$-discrepancy for any $p \in [2,\infty)$. Furthermore, it was shown by Schmidt~\cite{schX} that also for any $p \in (1,2)$ we have
\begin{equation}\label{extsch}
L_{p,N}(\cP_{N,s}) \gg_{s,p} \frac{(\log N)^{\frac{s-1}{2}}}{N}
\end{equation}
for any $N$-element point set $\cP_{N,s}$ in $[0,1)^s$.

In 1956 Davenport \cite{dav} proved that the lower bound \eqref{bd_roth} is best possible for the $L_2$-discrepancy in dimension 2. He considered the $N=2 M$ points $(\{\pm n \alpha\},n/M)$ for $n=1,2,\ldots,M$ and showed that if $\alpha$ is an irrational number having a continued fraction expansion with bounded partial quotients then the $L_2$-discrepancy of the collection $\cP_{N,2}^{{\rm sym}}(\alpha)$ of these points satisfies $$L_{2,N}(\cP_{N,2}^{{\rm sym}}(\alpha)) \ll_{\alpha} \frac{\sqrt{\log N}}{N}$$ where the implied constant only depends on $\alpha$. Nowadays there exist several variants of such ``symmetrized'' point sets having optimal order of $L_2$-discrepancy in dimension 2, see, for example, the work of Larcher and Pillichshammer \cite{lp} who study the $L_2$-discrepancy of symmetrized digital nets or the work of Proinov \cite{pro1988a}. A nice discussion of the topic, which is often referred to as {\it Davenport's reflection principle} can be found in \cite{CS2000}. Symmetrized Hammersley point sets will be considered in Section~\ref{sec5}. Recently Bilyk \cite{Bil} proved that unsymmetrized versions of Davenport's point sets, i.e., point sets of the form $\cP_{N,2}(\alpha)=\{(\{n \alpha\},n/N)\ : \, n=0,1,\ldots,N-1\}$ satisfy $$L_{2,N}(\cP_{N,2}(\alpha)) \ll_{\alpha} \frac{\sqrt{\log N}}{N}$$ if and only if the bounded partial quotients of $\alpha=[a_0;a_1,a_2,\ldots]$ satisfy $$\left|\sum_{k=0}^n(-1)^k a_k\right| \ll_\alpha \sqrt{n}.$$ Further examples of two-dimensional finite point sets with optimal order of $L_2$-discrepancy which are based on scrambled digital nets can be found in \cite{FauPi09a,FauPi09,FauPiPri11,FauPiPriSch09,goda,HZ,KriPi2006}. One prominent instance in this class are digit shifted Hammersley point sets. For these it is well known that the $L_2$-discrepancy is of optimal order if the number of $0$s and $1$s in the dyadic shifts are balanced (see, for example, \cite{KriPi2006}). Digit shifted Hammersley point sets will be considered in Section~\ref{sec4}\\

For completeness we mention also some results for arbitrary dimensions: in \cite{roth2} Roth proved that the bound \eqref{bd_roth} is best possible in dimension 3 and finally Roth \cite{Roth4} and Frolov \cite{Frolov} proved that the bound \eqref{bd_roth} is best possible in any dimension. In \cite{chen1980} Chen showed that the $L_p$-discrepancy bound \eqref{extsch} is best possible in the order of magnitude in $N$ for any $p \in (1,\infty)$, i.e., for every $N,s \in \NN$, $N \ge 2$, there exists an $N$-element point set $\cP_{N,s}$ in $[0,1)^s$ such that $$L_{p,N}(\cP_{N,s}) \ll_{s,p} \frac{(\log N)^{\frac{s-1}{2}}}{N},$$ where the implied constant only depends on $s$ and $p$, but not on $N$. See also \cite{BC} for more information. Further existence results for point sets with optimal order of $L_p$-discrepancy can be found in \cite{CS3,DP05b,Skr12}. However, all these results for dimension 3 and higher are only existence results obtained by averaging arguments and it remained a long standing open question in discrepancy theory to find explicit constructions of finite point sets with optimal order of $L_2$-discrepancy in the sense of Roth's lower bound. The breakthrough in this direction was achieved by Chen and Skriganov \cite{CS02}, who proved a complete solution to this problem. They gave for the first time for every integer $N \ge 2$ and every dimension $s \in \NN$, explicit constructions of finite $N$-element point sets in $[0,1)^s$ whose $L_2$-discrepancy achieves an order of convergence of $(\log N)^{(s-1)/2}/N$. The result in \cite{CS02} was extended to the $L_p$-discrepancy for $p \in (1,\infty)$ by Skriganov~\cite{Skr} who used Littlewood-Paley theory in his proofs. This will also play a major role in our paper, see Lemma~\ref{lpi}. Further constructions of point sets with optimal $L_p$-discrepancy can be found in \cite{D14,DP14,Lev14}. See also \cite{bil11,DP14a} for more detailed surveys.\\

\section{$L_p$-discrepancy of digit shifted Hammersley point sets}\label{sec4}

Let $\bssigma=(\sigma_1,\sigma_2,\ldots,\sigma_{n}) \in \{0,1\}^n$. The {\it two-dimensional digit shifted Hammersley point set} is given by
$$
 {\cal R}_{n,\bssigma} = \Big\{ \Big( \frac{t_n}{2}+\frac{t_{n-1}}{2^2}+\cdots + \frac{t_1}{2^n} ,  
                       \frac{t_{1} \oplus \sigma_1}{2}+\frac{t_{2}\oplus \sigma_2}{2^2}  +\cdots + \frac{t_n \oplus \sigma_n}{2^n} \Big) \ : \ 
                       t_1,\ldots,t_n \in \{0,1\}   \Big\} 
$$
where $t \oplus \sigma =t+\sigma \pmod{2}$ for $t,\sigma \in \{0,1\}$. 
This point set contains $N=2^n$ elements. If $\bssigma=\bszero=(0,0,\ldots,0)$, then we obtain the classical two-dimensional Hammersley point set \eqref{defHam}.

It was shown in \cite{HZ} that two-dimensional digit shifted Hammersley point sets satisfy the $L_2$-discrepancy estimate
$$L_{2,N}({\cal R}_{n,\bssigma}) \ll  \frac{\sqrt{\log N}}{N},$$
whenever $\bssigma=(1,0,1,0,\ldots)$, which is optimal according to \eqref{bd_roth}. An exact formula for $L_{2,N}({\cal R}_{n,\bssigma})$ and a generalization of the result can be found in \cite{KriPi2006}. In  particular it is shown in \cite{KriPi2006} that $$L_{2,N}({\cal R}_{n,\bssigma}) \ll  \frac{\sqrt{\log N}}{N},$$
whenever the number of $0$- and $1$-components of $\bssigma$ are ``more or less'' balanced, i.e., $\#\{j \ : \ \sigma_j=0\} \approx n/2$. See also \cite[Section~3.1]{DHP14}. Motivated by these results the question\footnote{This question was stated by J. Dick during a private communication at the Oberwolfach workshop ``Uniform Distribution Theory and Applications'', Sept. 25 -- Oct. 5, 2013.} arises whether digit shifted Hammersley point sets can also achieve optimal order of $L_p$-discrepancy for any $p \in (1,\infty)$?

We answer this question in the affirmative and generalize the results in \cite{HZ,KriPi2006} to the case of $L_p$-discrepancy for all $p \in (1,\infty)$. The following result is already announced in \cite{DHP14}.

\begin{theorem}\label{thm1}
Let $p \in (1,\infty)$. Let $\bssigma \in \{0,1\}^n$ and $a_n=\#\{j \ : \ \sigma_j=0\}$. The $L_p$-discrepancy of the two-dimensional digit shifted Hammersley point set satisfies $$L_{p,N}({\cal R}_{n,\bssigma}) \ll_p \frac{\sqrt{\log N}}{N}$$ if and only if  $|2 a_n-n| \ll_p \sqrt{n}$.
\end{theorem}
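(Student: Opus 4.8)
The plan is to carry out the Littlewood--Paley (Haar) argument announced in the introduction, feeding the explicit Haar coefficients of the Hammersley discrepancy function from \cite{hin2010} into the $L_p$ square--function estimate of Lemma~\ref{lpi}. Write $f_{\bssigma}:=D_N({\cal R}_{n,\bssigma},\cdot)$, and let $\mu_{\bsj,\bsm}:=\langle f_{\bssigma},h_{\bsj,\bsm}\rangle$ denote its coefficients in the tensor product Haar basis on $[0,1)^2$, indexed by levels $\bsj=(j_1,j_2)$ with $j_1,j_2\ge-1$ and associated shifts $\bsm$; put $|\bsj|:=\max(j_1,0)+\max(j_2,0)$. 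By Lemma~\ref{lpi}, $\|f_{\bssigma}\|_{L_p}$ is, with constants depending only on $p$, comparable to the $L_p$--norm of the dyadic square function assembled from the $\mu_{\bsj,\bsm}$, so it suffices to estimate that square function from above and from below. The two sides together yield the two implications of Theorem~\ref{thm1}, most naturally through the two--sided bound $\|f_{\bssigma}\|_{L_p}\asymp_p(\sqrt{\log N}+|2a_n-n|)/N$, from which the claim follows since the right--hand side is $\ll_p\sqrt{\log N}/N$ exactly when $|2a_n-n|\ll_p\sqrt n\asymp\sqrt{\log N}$.

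The first substantive step is to record the Haar coefficients of the unshifted Hammersley discrepancy function from \cite{hin2010}, and to describe how a digit shift $\bssigma$ acts on them: it permutes and reflects the dyadic boxes $I_{\bsj,\bsm}$ and inserts signs $(-1)^{\sigma_i}$ into the coefficients. I would then split the index set into three regions according to $|\bsj|$. In the \emph{fine} region ($j_1,j_2\ge0$, $j_1+j_2\ge n$) the coefficients decay geometrically in $j_1+j_2-n$ and, up to signs, do not see $\bssigma$; in the \emph{degenerate} region ($j_1=-1$ or $j_2=-1$) the coefficients contribute only lower--order terms; and the \emph{coarse} region ($j_1,j_2\ge0$, $j_1+j_2<n$) carries the entire $\bssigma$--dependence. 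Direct summation shows that the fine and degenerate regions contribute $\ll_p\sqrt{\log N}/N$, uniformly in $\bssigma$.

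The coarse region is the heart of the matter. Here $\mu_{\bsj,\bsm}$ vanishes unless $\bsm$ lies in a restricted ``diagonal'' set attached to $\bsj$, and on that set it is, up to a normalization of order $2^{-|\bsj|}$, a $\bssigma$--dependent combinatorial factor. The decisive feature --- and the reason that only the count $a_n$, and not the arrangement of the $\sigma_i$, enters the answer --- is that in the square function these factors get squared and summed, first over $\bsm$ at a fixed level and then over levels, whereupon the ``internal'' part of the sign pattern $(\sigma_i)$ cancels and the only surviving global statistic is the full alternating sum $\sum_{i=1}^n(-1)^{\sigma_i}=n-2a_n$. I expect the outcome to be: a ``core'' contribution of order $\sqrt{\log N}/N$, uniform in $\bssigma$; plus a defect of order $|2a_n-n|/N$, concentrated in a bounded number of coarse coefficients of bounded level, where the $\bssigma$--factor reaches size $\asymp|2a_n-n|$ and the $L_p$--normalization neither amplifies nor damps it. Combining the three regions gives the two--sided estimate, and the ``only if'' direction follows at once: for the lower bound one keeps a single nonzero coefficient of size $\asymp|2a_n-n|/N$ and applies the lower half of Lemma~\ref{lpi}, which, being a square--function inequality, is immune to any cancellation against the other coefficients.

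I expect the coarse region to be the main obstacle, for two reasons. First, one must correctly track the action of the digit shift on the Haar coefficients and verify the cancellation that collapses the $\bssigma$--dependence to the single number $a_n$; this is precisely where one must resist the natural but wrong expectation --- suggested by Davenport--type point sets such as $\{n\alpha\}$ --- that partial sums $\sum_{i\le\ell}(-1)^{\sigma_i}$ should play a role. Second, the double summation over $\bsm$ within a level and then over levels has to be carried out with enough precision to produce \emph{exactly one} power of $\log N$ in the balanced case (rather than the $(\log N)^2$ coming from the unmodified Hammersley set); for $p\ne2$ this means one must genuinely work with the mixed $\ell^2(\text{levels})$--$L_p$ norm from Lemma~\ref{lpi}, rather than the Parseval identity available only at $p=2$, and exploit the sparsity of the nonzero coefficients within each level.
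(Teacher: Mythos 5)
Your overall strategy coincides with the paper's (Littlewood--Paley via Lemma~\ref{lpi} together with the Haar coefficients of \cite{hin2010}, a region-by-region summation for the upper bound, and isolation of a single coefficient for the lower bound), but your description of the Haar coefficients misplaces the $\bssigma$-dependence, and this breaks the ``only if'' direction as you propose to run it. By Lemma~\ref{lem:HCH}(i), in your coarse region ($j_1,j_2\ge0$, $j_1+j_2<n-1$) one has $|\mu_{\bsj,\bsm}|=2^{-2(n+1)}$ for \emph{every} $\bsm\in\mathbb{D}_{\bsj}$: nothing vanishes, there is no distinguished ``diagonal'' set, and the absolute values carry no $\bssigma$-dependence whatsoever. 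Since the square function only sees $\mu_{\bsj,\bsm}^2$, there is no cancellation over $\bsm$ and over levels that could ``collapse to $n-2a_n$''; this region contributes exactly the uniform core $\asymp n/2^{2n}$ and nothing else. Indeed, every coefficient with $j_1,j_2\ge0$, and every coefficient with exactly one index equal to $-1$, admits a value or bound independent of $\bssigma$. The \emph{entire} dependence on $a_n$ sits in the single coefficient $\mu_{(-1,-1),(0,0)}=\int_{[0,1]^2}D_N({\cal R}_{n,\bssigma},\bst)\rd\bst=2^{-(n+3)}(2a_n+4-n)+2^{-2(n+1)}$, i.e.\ in the mean of the discrepancy function --- which lies precisely in the ``degenerate region'' you dismiss as lower order. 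For unbalanced shifts this one coefficient has size up to $\asymp n/2^{n}$ and dominates everything else, so that region is not lower order at all.

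Consequently the lower bound you describe --- keep a single nonzero \emph{coarse} coefficient of size $\asymp|2a_n-n|/N$ --- cannot be carried out: no such coefficient exists, and isolating any single term of the square function with $j_1,j_2\ge0$ yields only $2^{|\bsj|(1-1/p)}|\mu_{\bsj,\bsm}|=O(1/N)$, with no trace of $a_n$. The repair is immediate once the coefficient is located correctly: discarding all terms of the square function except $\bsj=(-1,-1)$ gives $L_{p,N}({\cal R}_{n,\bssigma})\gg_p|\mu_{(-1,-1),(0,0)}|$, which forces $|2a_n-n|\ll_p\sqrt n$ whenever $L_{p,N}({\cal R}_{n,\bssigma})\ll_p\sqrt n/2^n$; conversely, for sufficiency one only needs that $|2a_n-n|\ll\sqrt n$ makes $\mu_{(-1,-1),(0,0)}^2\ll n/2^{2n}$. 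Your remaining steps --- Minkowski's inequality in $L_{p/2}$ after squaring, geometric decay for $j_1+j_2\ge n$, and exploiting that the boxes of a fixed shape meeting the point set cover a set of measure at most $2^{n-|\bsj|}$ (the ``sparsity'' you allude to) --- are sound and match the paper's argument.
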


In other words, we achieve exactly for those shifts $\bssigma$ optimal order of $L_p$-discrepancy of ${\cal R}_{n,\bssigma}$ for which the number of $0$- and $1$-components of $\bssigma$ are ``more or less'' balanced.

The proof of this result uses the Haar system (in base $2$) which we introduce now: 

To begin with, a {\it dyadic interval} of length $2^{-j}, j\in {\mathbb N}_0,$ in $[0,1)$ is an interval of the form 
$$ I=I_{j,m}:=\left[\frac{m}{2^j},\frac{m+1}{2^j}\right) \ \ \mbox{for } \  m=0,1,\ldots,2^j-1.$$ We also define $I_{-1,0}=[0,1)$.
The left and right half of $I=I_{j,m}$ are the dyadic intervals $I^+ = I_{j,m}^+ =I_{j+1,2m}$ and $I^- = I_{j,m}^- =I_{j+1,2m+1}$, respectively. The {\it Haar function} $h_I = h_{j,m}$ with support $I$ 
is the function on $[0,1)$ which is  $+1$ on the left half of $I$, $-1$ on the right half of $I$ and 0 outside of $I$. The $L_\infty$-normalized {\it Haar system} consists of
all Haar functions $h_{j,m}$ with $j\in{\mathbb N}_0$ and  $m=0,1,\ldots,2^j-1$ together with the indicator function $h_{-1,0}$ of $[0,1)$.
Normalized in $L_2([0,1))$ we obtain the {\it orthonormal Haar basis} of $L_2([0,1))$. 

Let ${\mathbb N}_{-1}=\{-1,0,1,2,\ldots\}$ and define ${\mathbb D}_j=\{0,1,\ldots,2^j-1\}$ for $j\in{\mathbb N}_0$ and ${\mathbb D}_{-1}=\{0\}$.
For $\bsj=(j_1,\dots,j_s)\in{\mathbb N}_{-1}^s$ and $\bsm=(m_1,\dots,m_s)\in {\mathbb D}_{\bsj} :={\mathbb D}_{j_1}\times \ldots \times {\mathbb D}_{j_s}$, 
the {\it Haar function} $h_{\bsj,\bsm}$
is given as the tensor product 
$$h_{\bsj,\bsm} (x) = h_{j_1,m_1}(x_1)\, \cdots \, h_{j_s,m_s}(x_s) \ \ \ \mbox{ for } \bsx=(x_1,\dots,x_s)\in[0,1)^s.$$
The boxes $$I_{\bsj,\bsm} = I_{j_1,m_1} \times \ldots \times I_{j_s,m_s}$$ are called {\it dyadic boxes}.
Two boxes $I_{\bsj_1,\bsm_1}$ and $I_{\bsj_2,\bsm_2}$ have the {\it same shape} if $\bsj_1=\bsj_2$.
A crucial combinatorial property is that for $\bsj=(j_1,\dots,j_s)\in{\mathbb N}_{0}^s$, there are exactly $2^{j_1+\dots+j_s}$ boxes of that shape which are mutually disjoint.

The $L_\infty$-normalized tensor {\it Haar system} consists of all Haar functions $h_{\bsj,\bsm}$ with $\bsj\in{\mathbb N}_{-1}^s$ and  
$\bsm \in {\mathbb D}_{\bsj}$. Normalized in $L_2([0,1)^s)$ we obtain the {\it orthonormal Haar basis} of $L_2([0,1)^s)$.\\

Direct, but in some cases a little tedious computations, for which we refer to \cite[Theorem~3.1]{hin2010}, give the {\it Haar coefficients} $$\mu_{\bsj,\bsm} = \langle  D_N({\cal R}_{n,\bssigma}, \, \cdot \,), h_{\bsj,\bsm} \rangle=\int_{[0,1]^2} D_N({\cal R}_{n,\bssigma},\bst)h_{\bsj,\bsm}(\bst) \rd \bst$$ of the local discrepancy of the two-dimensional digit shifted Hammersley point sets:
\begin{lemma}[{\cite[Theorem~3.1]{hin2010}}] \label{lem:HCH}
Let $\bsj=(j_1,j_2)\in \NN_{0}^2$. Then
 \begin{itemize}
  \item[(i)] if $j_1+j_2<n-1$ and $j_1,j_2\ge 0$ then $|\mu_{\bsj,\bsm}| = 2^{-2(n+1)}$.
  \item[(ii)] if $j_1+j_2\ge n-1$ and $0\le j_1,j_2\le n$ then $|\mu_{\bsj,\bsm}| \le 2^{-(n+j_1+j_2+1)}$ and
     $|\mu_{\bsj,\bsm}| = 2^{-2(j_1+j_2+2)}$ for all but at most $2^n$ coefficients $\mu_{\bsj,\bsm}$ with $\bsm\in {\mathbb D}_{\bsj}$ (the latter appears if there is no point of ${\cal R}_{n,\bssigma}$ in the interior of $I_{\bsj,\bsm}$).
  \item[(iii)] if $j_1 \ge n$ or $j_2 \ge n$ then $|\mu_{\bsj,\bsm}| = 2^{-2(j_1+j_2+2)}$.
 \end{itemize} 
 
 Now let $\bsj=(-1,k)$ or $\bsj=(k,-1)$ with $k\in \NN_0$. Then
 \begin{itemize}
  \item[(iv)] if $k<n$ then $|\mu_{\bsj,\bsm}| \le 2^{-(n+k)}$.
  \item[(v)] if $k\ge n$  then $|\mu_{\bsj,\bsm}| = 2^{-(2k+3)}$.
 \end{itemize}
 Finally, if $a_n=\#\{j \ : \ \sigma_j=0\}$ then
  \begin{itemize}
   \item[(vi)] $\mu_{(-1,-1),(0,0)} = 2^{-(n+3)}  (2 a_n +4 -n)+ 2^{-2(n+1)}$.
  \end{itemize} 
\end{lemma}

In the proof of Theorem~\ref{thm1} we make use of these results in conjunction with the {\it Littlewood-Paley inequality} which provides a tool which can be used to replace Parseval's equality and Bessel's inequality for functions in $L_p(\RR^s)$ with $p \in (1,\infty)$.
It involves the {\em square function} $S(f)$ of a function $f\in L_p([0,1)^2)$ (we restrict ourselves to the case $s=2$ since this is the only case of interest here) which is given as
$$ S(f) = \left( \sum_{\bsj \in \NN_{-1}^2} \sum_{\bsm \in \mathbb{D}_{\bsj}} 2^{2|\bsj|} \, \langle f , h_{\bsj,\bsm} \rangle^2 \, {\mathbf 1}_{I_{\bsj,\bsm}} \right)^{1/2},$$ where for $\bsj=(j_1,j_2)$ we write $|\bsj|=\max\{0,j_1\}+\max\{0,j_2\}$,  and where ${\mathbf 1}_I$ is the characteristic function of $I$.

\begin{lemma}[Littlewood-Paley inequality]\label{lpi}
 Let $p \in (1,\infty)$ and let $f\in L_p([0,1)^2)$. Then 
 $$ \| S(f) \|_{L_p} \asymp_{p} \| f \|_{L_p}.$$
\end{lemma}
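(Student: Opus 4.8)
The plan is to derive the two-dimensional inequality from the classical one-dimensional \emph{martingale square-function inequality} by exploiting the tensor-product structure of the Haar system and iterating over the two coordinates. First I would isolate the Haar coefficients in the second variable: writing $c_{j_2,m_2}(t_1)=\int_0^1 f(t_1,y)\,h_{j_2,m_2}(y)\rd y$ one has $\langle f,h_{\bsj,\bsm}\rangle=\int_0^1 c_{j_2,m_2}(t_1)\,h_{j_1,m_1}(t_1)\rd t_1$, so the square function factorizes through the partial (second-variable) square function
$$ S_2(f)(\bst)=\Big(\sum_{j_2\in\NN_{-1}}\sum_{m_2\in\mathbb D_{j_2}} 2^{2\max\{0,j_2\}}\,c_{j_2,m_2}(t_1)^2\,{\mathbf 1}_{I_{j_2,m_2}}(t_2)\Big)^{1/2}. $$
The goal is then to establish the two-sided estimates $\|S(f)\|_{L_p}\asymp_p\|S_2(f)\|_{L_p}$ and $\|S_2(f)\|_{L_p}\asymp_p\|f\|_{L_p}$ and to chain them.

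The engine in both steps is the one-dimensional statement: for $g\in L_p([0,1))$ and $1<p<\infty$, the scalar square function $s(g)=(\sum_{j,m}2^{2\max\{0,j\}}\langle g,h_{j,m}\rangle^2{\mathbf 1}_{I_{j,m}})^{1/2}$ satisfies $\|s(g)\|_{L_p}\asymp_p\|g\|_{L_p}$. I would prove this by identifying $s(g)$ with the dyadic martingale square function. Let $\EE_n$ denote conditional expectation with respect to the $\sigma$-algebra generated by the level-$n$ dyadic intervals, with martingale differences $d_0=\EE_0 g$ and $d_n=\EE_n g-\EE_{n-1}g$ for $n\ge1$. The difference $d_{n}$ is exactly the level-$(n-1)$ Haar projection, so by the mutual disjointness of the supports $I_{j,m}$ at a fixed level one gets $\sum_n |d_n|^2=s(g)^2$ pointwise (the $j=-1$ term reproducing $|\EE_0 g|^2=\langle g,h_{-1,0}\rangle^2{\mathbf 1}_{[0,1)}$). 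Burkholder's martingale square-function inequality then yields the claimed equivalence. The estimate $\|S_2(f)\|_{L_p}\asymp_p\|f\|_{L_p}$ follows by applying this for almost every fixed $t_1$ to $g=f(t_1,\cdot)$ in the variable $t_2$, for which $s(g)(t_2)=S_2(f)(t_1,t_2)$, and then integrating over $t_1$ via Fubini.

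The step from $S_2(f)$ to $S(f)$ is the crux, and it is where more than the plain scalar inequality is required. Here I would fix $t_2$; for each level $j_2$ let $m_2(j_2,t_2)$ be the unique index with $t_2\in I_{j_2,m_2}$, and form the $\ell^2(\NN_{-1})$-valued function $B(t_1)=(2^{\max\{0,j_2\}}c_{j_2,m_2(j_2,t_2)}(t_1))_{j_2\in\NN_{-1}}$, for which $\|B(t_1)\|_{\ell^2}=S_2(f)(t_1,t_2)$. Applying the $\ell^2$-valued analogue of the one-dimensional square-function inequality in the variable $t_1$ gives $\int_0^1 S(f)(t_1,t_2)^p\rd t_1\asymp_p\int_0^1 S_2(f)(t_1,t_2)^p\rd t_1$, after which integration over $t_2$ completes the argument. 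The main obstacle is precisely this Hilbert-valued inequality: the tensor Haar system corresponds to a \emph{bi-parameter} (product) martingale transform, so the one-parameter Burkholder theorem does not apply to $S(f)$ directly. I would overcome it by recording that Burkholder's square-function inequality extends verbatim to martingales with values in a Hilbert space, with constant depending only on $p$ and not on the space (the proof uses only the inner-product structure); equivalently, one may randomize the Haar coefficients by independent Rademacher signs and combine Khintchine's inequality with the scalar result. Since the ambient Hilbert space is the fixed $\ell^2(\NN_{-1})$, the resulting constant is independent of $t_2$, which is exactly what legitimizes the final integration and must be checked with care.
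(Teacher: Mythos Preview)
The paper does not give a proof of this lemma at all; it is stated as a known tool and simply referenced to the literature (Burkholder, Pipher, Stein, Wang). So there is no ``paper's own proof'' to compare against.

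Your outline is a correct and standard route to the bi-parameter dyadic Littlewood--Paley inequality. The identification of the one-dimensional Haar square function with the dyadic martingale square function and the appeal to Burkholder's inequality is exactly right, and the reduction of $\|S(f)\|_{L_p}\asymp_p\|S_2(f)\|_{L_p}$ to an $\ell^2$-valued one-parameter square-function estimate is the correct way to handle the product structure. Two remarks worth recording. First, the Hilbert-space-valued Burkholder inequality you invoke is indeed available with constants depending only on $p$ (the good-$\lambda$ proof goes through verbatim, or one can quote that Hilbert spaces are UMD), so your uniformity-in-$t_2$ check succeeds. Second, the Rademacher/Khintchine alternative you mention is essentially the same argument in disguise: randomizing the signs of the $j_2$-blocks and averaging converts the $\ell^2$-valued inequality into a scalar one plus Khintchine, which again yields $p$-dependent constants only. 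Either variant matches the spirit of the references the paper cites; in particular Pipher's paper treats precisely this kind of iterated (``double'') square function.
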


Proofs of this equivalence of norms between the function and its square function and further details also yielding the right asymptotic behavior of the involved constants can be found in \cite{bur88,pip86,ste93,WG91}.\\

\noindent{\it Proof of Theorem~\ref{thm1}.} First we show the sufficiency of the condition. Using Lemma~\ref{lpi} with $f=D_{N}({\cal R}_{n,\bssigma},\cdot)$ we have
\begin{eqnarray*}
L_{p,N}({\cal R}_{n,\bssigma}) & = & \|D_{N}({\cal R}_{n,\bssigma},\cdot)\|_{L_p}\\
& \ll_p & \|S(D_{N}({\cal R}_{n,\bssigma},\cdot))\|_{L_p}\\
& = & \left\| \left( \sum_{\bsj \in \NN_{-1}^2} \sum_{\bsm \in \mathbb{D}_{\bsj}} 2^{2|\bsj|} \, \mu_{\bsj,\bsm}^2 \, {\mathbf 1}_{I_{\bsj,\bsm}} \right)^{1/2} \right\|_{L_p}\\
& = & \left\|\sum_{\bsj \in \NN_{-1}^2} 2^{2|\bsj|} \sum_{\bsm \in \mathbb{D}_{\bsj}}  \mu_{\bsj,\bsm}^2 \, {\mathbf 1}_{I_{\bsj,\bsm}} \right\|_{L_{p/2}}^{1/2}\\
& \le & \left( \sum_{\bsj \in \NN_{-1}^2} 2^{2|\bsj|} \left\| \sum_{\bsm \in \mathbb{D}_{\bsj}}  \mu_{\bsj,\bsm}^2 \, {\mathbf 1}_{I_{\bsj,\bsm}} \right\|_{L_{p/2}} \right)^{1/2},
\end{eqnarray*}
where we used Minkowski's inequality for the $L_{p/2}$-norm. Hence, in order to prove the result it suffices to show that 
\begin{equation}\label{eq2do}
\sum_{\bsj \in \NN_{-1}^2} 2^{2|\bsj|} \left\| \sum_{\bsm \in \mathbb{D}_{\bsj}}  \mu_{\bsj,\bsm}^2 \, {\mathbf 1}_{I_{\bsj,\bsm}} \right\|_{L_{p/2}} \ll \frac{n}{2^{2n}}.
\end{equation}
To this end we split the sum over the $\bsj$'s into several parts and apply Lemma~\ref{lem:HCH}:
\begin{itemize}
\item $\bsj \in \NN_{0}^2$ such that $|\bsj| < n-1$: According to {\it (i)} of Lemma~\ref{lem:HCH} we have
\begin{eqnarray*}
\sum_{\bsj \in \NN_{0}^2 \atop |\bsj| < n-1} 2^{2|\bsj|} \left\| \sum_{\bsm \in \mathbb{D}_{\bsj}}  \mu_{\bsj,\bsm}^2 \, {\mathbf 1}_{I_{\bsj,\bsm}} \right\|_{L_{p/2}} & = & \sum_{\bsj \in \NN_{0}^2 \atop |\bsj| < n-1} 2^{2|\bsj|} 2^{-4(n+1)} \left\| \sum_{\bsm \in \mathbb{D}_{\bsj}}  {\mathbf 1}_{I_{\bsj,\bsm}} \right\|_{L_{p/2}}\\
& = & \sum_{\bsj \in \NN_{0}^2 \atop |\bsj| < n-1} 2^{2|\bsj|} 2^{-4(n+1)}\\
& \ll & \frac{1}{2^{4n}} \sum_{k=0}^{n-2} 2^{2 k} \underbrace{\sum_{j_1,j_2=0 \atop j_1+j_2=k}^{\infty} 1}_{= k+1 \le n -1}\\
& \ll & \frac{n}{2^{2n}}. 
\end{eqnarray*}
Here we used that for fixed $\bsj$ the intervals $I_{\bsj,\bsm}$ with $\bsm \in \mathbb{D}_{\bsj}$ form a partition of the unit-square $[0,1)^2$ and hence $\sum_{\bsm \in \mathbb{D}_{\bsj}}  {\mathbf 1}_{I_{\bsj,\bsm}}=1$. 
\item $|\bsj| \ge n-1$ and $0 \le j_1,j_2 \le n$: Let $I_{\bsj,\bsm}^{\circ}$ denote the interior of a dyadic box $I_{\bsj,\bsm}$. According to {\it (ii)} of Lemma~\ref{lem:HCH} we have
\begin{eqnarray*}
\lefteqn{\sum_{j_1,j_2=0 \atop |\bsj| \ge n-1}^n 2^{2|\bsj|} \left\| \sum_{\bsm \in \mathbb{D}_{\bsj}}  \mu_{\bsj,\bsm}^2 \, {\mathbf 1}_{I_{\bsj,\bsm}} \right\|_{L_{p/2}}}\\
& = & \sum_{j_1,j_2=0 \atop |\bsj| \ge n-1}^n 2^{2|\bsj|} \left\| \sum_{\bsm \in \mathbb{D}_{\bsj}\atop {\cal R}_{n,\bssigma} \cap I_{\bsj,\bsm}^{\circ} =\emptyset}  \mu_{\bsj,\bsm}^2 \, {\mathbf 1}_{I_{\bsj,\bsm}} + \sum_{\bsm \in \mathbb{D}_{\bsj}\atop {\cal R}_{n,\bssigma} \cap I_{\bsj,\bsm}^{\circ} \not=\emptyset}  \mu_{\bsj,\bsm}^2 \, {\mathbf 1}_{I_{\bsj,\bsm}} \right\|_{L_{p/2}}\\
& \le & \sum_{j_1,j_2=0 \atop |\bsj| \ge n-1}^n 2^{2|\bsj|} 2^{-4(|\bsj|+2)} + \sum_{j_1,j_2=0 \atop |\bsj| \ge n-1}^n 2^{2|\bsj|} 2^{-2(n+|\bsj|+1)} \left\| \sum_{\bsm \in \mathbb{D}_{\bsj}\atop {\cal R}_{n,\bssigma} \cap I_{\bsj,\bsm}^{\circ} \not=\emptyset} {\mathbf 1}_{I_{\bsj,\bsm}} \right\|_{L_{p/2}},
\end{eqnarray*}
where we used Minkowski's inequality again. For the first sum in this estimate we have
\begin{eqnarray*}
\sum_{j_1,j_2=0 \atop |\bsj| \ge n-1}^n 2^{2|\bsj|} 2^{-4(|\bsj|+2)} & \ll & \sum_{k=n-1}^{2 n} \frac{1}{2^{2k}} \sum_{j_1,j_2=0 \atop j_1+j_2=k}^n 1 \\
& = &  \sum_{k=n-1}^{2 n} \frac{1}{2^{2k}} \sum_{j_1=0 \atop 0 \le k-j_1\le n}^n 1 \\
& \ll & \frac{n}{2^{2n}}.  
\end{eqnarray*}
Now we turn to the second sum 
\begin{equation}\label{su2nd}
\sum_{j_1,j_2=0 \atop |\bsj| \ge n-1}^n 2^{2|\bsj|} 2^{-2(n+|\bsj|+1)} \left\| \sum_{\bsm \in \mathbb{D}_{\bsj}\atop {\cal R}_{n,\bssigma} \cap I_{\bsj,\bsm}^{\circ} \not=\emptyset} {\mathbf 1}_{I_{\bsj,\bsm}} \right\|_{L_{p/2}}.
\end{equation}
Note that $$\sum_{\bsm \in \mathbb{D}_{\bsj}\atop {\cal R}_{n,\bssigma} \cap I_{\bsj,\bsm}^{\circ} \not=\emptyset} {\mathbf 1}_{I_{\bsj,\bsm}}$$ is the indicator function of a set, say $A_{\bsj}$, of measure at most $2^{n-|\bsj|}$. Hence \eqref{su2nd} can be written as 
\begin{eqnarray*}
\frac{1}{2^{2(n+1)}} \sum_{j_1,j_2=0 \atop |\bsj| \ge n-1}^n \left\|{\mathbf 1}_{A_{\bsj}} \right\|_{L_{p/2}} & = & \frac{1}{2^{2(n+1)}} \sum_{j_1,j_2=0 \atop |\bsj| \ge n-1}^n \left(\int_{[0,1]^2} {\mathbf 1}_{A_{\bsj}}(\bsx) \rd \bsx\right)^{2/p}\\
& \ll & \frac{1}{2^{2n}} \sum_{j_1,j_2=0 \atop |\bsj| \ge n-1}^n (2^{n-|\bsj|})^{2/p}\\
& = & \frac{1}{2^{2n}} 2^{2 n/p} \sum_{k=n-1}^{2 n} \frac{1}{2^{2 k/p}} \sum_{j_1,j_2=0 \atop j_1+j_2=k}^n 1 \\
& \ll & \frac{n}{2^{2n}} 2^{2 n/p} \sum_{k=n-1}^{2 n} \frac{1}{2^{2 k/p}}\\
& \ll & \frac{n}{2^{2n}}.
\end{eqnarray*}
Altogether we obtain that $$\sum_{j_1,j_2=0 \atop |\bsj| \ge n-1}^n 2^{2|\bsj|} \left\| \sum_{\bsm \in \mathbb{D}_{\bsj}}  \mu_{\bsj,\bsm}^2 \, {\mathbf 1}_{I_{\bsj,\bsm}} \right\|_{L_{p/2}} \ll \frac{n}{2^{2n}}$$ as desired. 
\item $\bsj \in \NN_0^2$, $j_1 \ge n$: According to \textit{(iii)} of Lemma~\ref{lem:HCH} we have
      \begin{eqnarray*}
         \lefteqn{\sum_{j_2=0}^{\infty}\sum_{j_1=n}^{\infty}2^{2|\bsj|}\left\| \sum_{\bsm \in \mathbb{D}_{\bsj}}  \mu_{\bsj,\bsm}^2 \, {\mathbf 1}_{I_{\bsj,\bsm}} \right\|_{L_{p/2}}}\\  & = &   \sum_{j_2=0}^{\infty}\sum_{j_1=n}^{\infty}2^{2|\bsj|}2^{-4(|\bsj|+2)}\left\| \sum_{\bsm \in \mathbb{D}_{\bsj}}  \, {\mathbf 1}_{I_{\bsj,\bsm}} \right\|_{L_{p/2}} \\ &=& \sum_{j_2=0}^{\infty}\sum_{j_1=n}^{\infty}2^{-2|\bsj|-8} \ll \frac{1}{2^{2n}}.
      \end{eqnarray*}
\item $\bsj \in \NN_0^2$, $j_2 \ge n$: Analogous to the case $\bsj \in \NN_0^2$, $j_1 \ge n$.
\item $\bsj=(-1,k)$ with $k \in \NN_0$ and $ 0 \leq k < n$: According to \textit{(iv)} of Lemma~\ref{lem:HCH} we have
     \begin{eqnarray*}
         \lefteqn{\sum_{k=0}^{n-1}2^{2k}\left\| \sum_{\bsm \in \mathbb{D}_{(-1,k)}}  \mu_{(-1,k),\bsm}^2 \, {\mathbf 1}_{I_{(-1,k),\bsm}} \right\|_{L_{p/2}}}\\  & \leq &   \sum_{k=0}^{n-1} 2^{2k}2^{-2(n+k)}\left\| \sum_{\bsm \in \mathbb{D}_{(-1,k)}}  \, {\mathbf 1}_{I_{(-1,k),\bsm}} \right\|_{L_{p/2}} \\ &=& \sum_{k=0}^{n-1}2^{-2n} = \frac{n}{2^{2n}}.
      \end{eqnarray*}
\item $\bsj =(k,-1)$ with $k \in \NN_0$ and $ 0 \leq k < n$: Analogous to the case $\bsj=(-1,k)$ with $k \in \NN_0$ and $ 0 \leq k < n$.
\item $\bsj=(-1,k)$ with $k \in \NN_0$ and $k \geq n$: According to \textit{(v)} of Lemma~\ref{lem:HCH} we have
     \begin{eqnarray*}
         \lefteqn{\sum_{k=n}^{\infty}2^{2k}\left\| \sum_{\bsm \in \mathbb{D}_{(-1,k)}}  \mu_{(-1,k),\bsm}^2 \, {\mathbf 1}_{I_{(-1,k),\bsm}} \right\|_{L_{p/2}}}\\  & = &   \sum_{k=n}^{\infty} 2^{2k}2^{-2(2k+3)}\left\| \sum_{\bsm \in \mathbb{D}_{(-1,k)}}  \, {\mathbf 1}_{I_{(-1,k),\bsm}} \right\|_{L_{p/2}} \\ &=& \sum_{k=n}^{\infty}2^{-2k-6} \ll \frac{1}{2^{2n}}.
      \end{eqnarray*}
\item $\bsj=(k,-1)$ with $k \in \NN_0$ and $k \geq n$: Analogous to the case $\bsj=(-1,k)$ with $k \in \NN_0$ and $k \geq n$.
\item $\bsj=(-1,-1)$: According to \textit{(vi)} of Lemma~\ref{lem:HCH} we have
       \begin{eqnarray}\label{mumineins}
         \left\|   \mu_{(-1,-1),(0,0)}^2 \, {\mathbf 1}_{\left[0,1\right]^2} \right\|_{L_{p/2}}  & = &    \mu_{(-1,-1),(0,0)}^2   \|  \, {\mathbf 1}_{\left[0,1\right]^2} \|_{L_{p/2}} \nonumber \\ &=& \left(\frac{2a_n+4-n}{2^{n+3}}+\frac{1}{2^{2(n+1)}}\right)^2.     \end{eqnarray} 
         Now we assume that $|2 a_n-n| \ll \sqrt{n}$. Then we have
         \[ \left\|   \mu_{(-1,-1),(0,0)}^2 \, {\mathbf 1}_{\left[0,1\right]^2} \right\|_{L_{p/2}}\ll \frac{n}{2^{2n}}. \]
\end{itemize}
Altogether this proves inequality \eqref{eq2do} and therefore also the first point of Theorem~\ref{thm1}. 

It remains to show that the condition on $a_n$ is also necessary. We use again Lemma~\ref{lpi} with $f=D_{N}({\cal R}_{n,\bssigma},\cdot)$ and obtain
\begin{eqnarray*}
L_{p,N}({\cal R}_{n,\bssigma}) & = & \|D_{N}({\cal R}_{n,\bssigma},\cdot)\|_{L_p}\\
& \gg_p & \|S(D_{N}({\cal R}_{n,\bssigma},\cdot))\|_{L_p}\\
& = & \left\| \left( \sum_{\bsj \in \NN_{-1}^2} \sum_{\bsm \in \mathbb{D}_{\bsj}} 2^{2|\bsj|} \, \mu_{\bsj,\bsm}^2 \, {\mathbf 1}_{I_{\bsj,\bsm}} \right)^{1/2} \right\|_{L_p}\\
& = & \left\|\sum_{\bsj \in \NN_{-1}^2} 2^{2|\bsj|} \sum_{\bsm \in \mathbb{D}_{\bsj}}  \mu_{\bsj,\bsm}^2 \, {\mathbf 1}_{I_{\bsj,\bsm}} \right\|_{L_{p/2}}^{1/2}\\
& \gg & \left\|   \mu_{(-1,-1),(0,0)}^2 \, {\mathbf 1}_{\left[0,1\right]^2} \right\|_{L_{p/2}}^{1/2}\\
& = & \left|\frac{2a_n+4-n}{2^{n+3}}+\frac{1}{2^{2(n+1)}}\right|,
\end{eqnarray*}
where the last equality follows from \eqref{mumineins}. From this it is evident that $$L_{p,N}({\cal R}_{n,\bssigma}) \ll_p \frac{\sqrt{\log N}}{N} \asymp \frac{\sqrt{n}}{2^n}$$ implies $|2a_n -n| \ll_p \sqrt{n}$. $\qed$

\section{$L_p$-discrepancy of symmetrized digit shifted Hammersley point sets}\label{sec5}

We define the {\it two-dimensional symmetrized digit shifted Hammersley point set} 
\begin{equation}\label{shHam}
{\cal R}_{n,\bssigma}^{{\rm sym}}={\cal R}_{n,\bssigma} \cup {\cal R}_{n,\bssigma_{\ast}}, 
\end{equation}
where we put $\bssigma_{\ast}=\bssigma \oplus \mathbf1=(\sigma_1 \oplus 1,\sigma_2 \oplus 1, \dots, \sigma_n \oplus 1)$. This set contains $N=2^{n+1}$ points. It is easy to see that ${\cal R}_{n,\bssigma}^{{\rm sym}}$ can also be written as the union of ${\cal R}_{n,\bssigma}$ with the set of points $$\left\{\left(x,1-\frac{1}{2^n}-y\right)\ : \ (x,y) \in {\cal R}_{n,\bssigma}\right\}.$$ Hence in view of in  Davenport's reflection principle the attribute ``symmetrized'' is appropriate. For an example see Figure~\ref{f18}.

\begin{figure}[htp]
     \centering
     {\includegraphics[width=50mm]{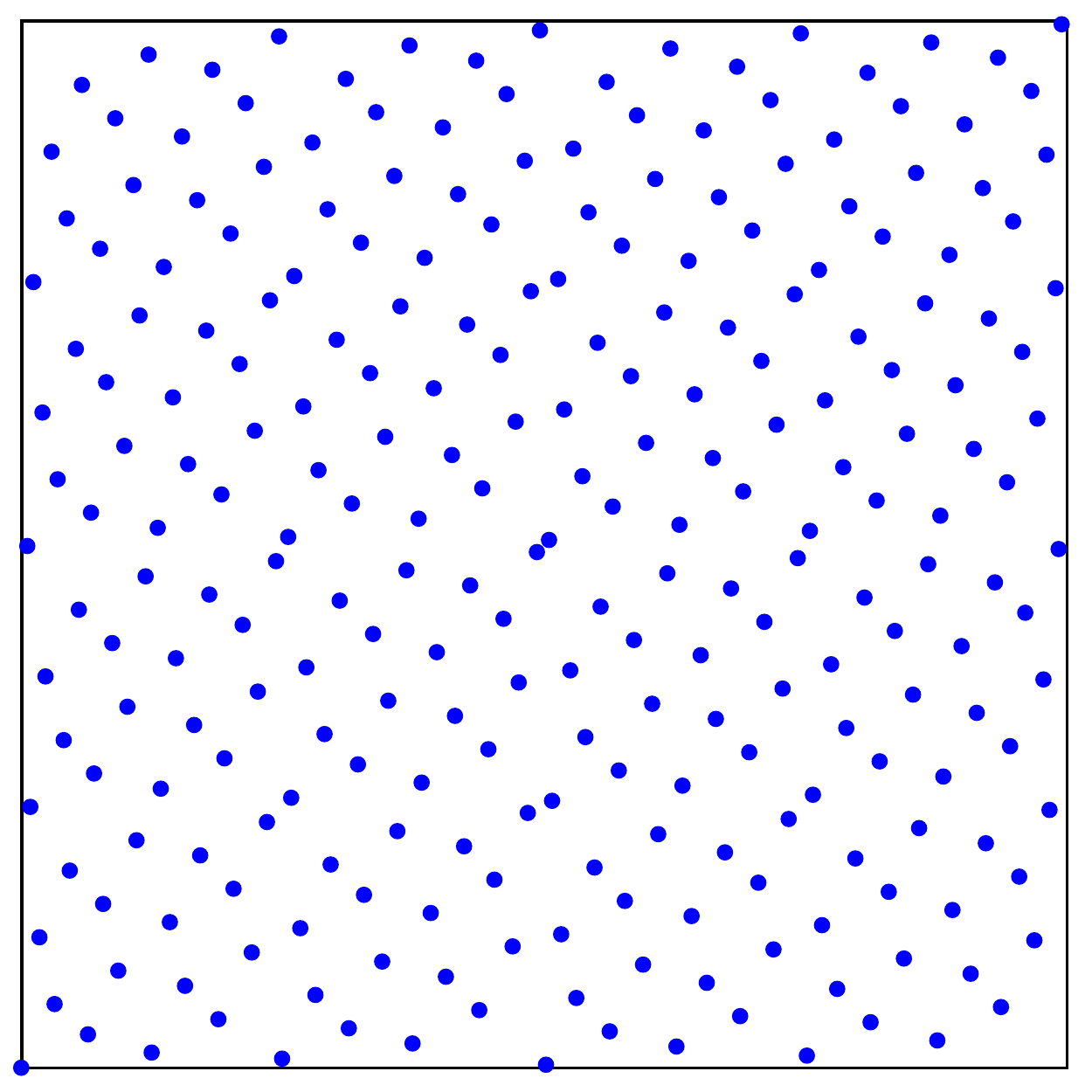}}
     \hspace{.1in}
     {\includegraphics[width=50mm]{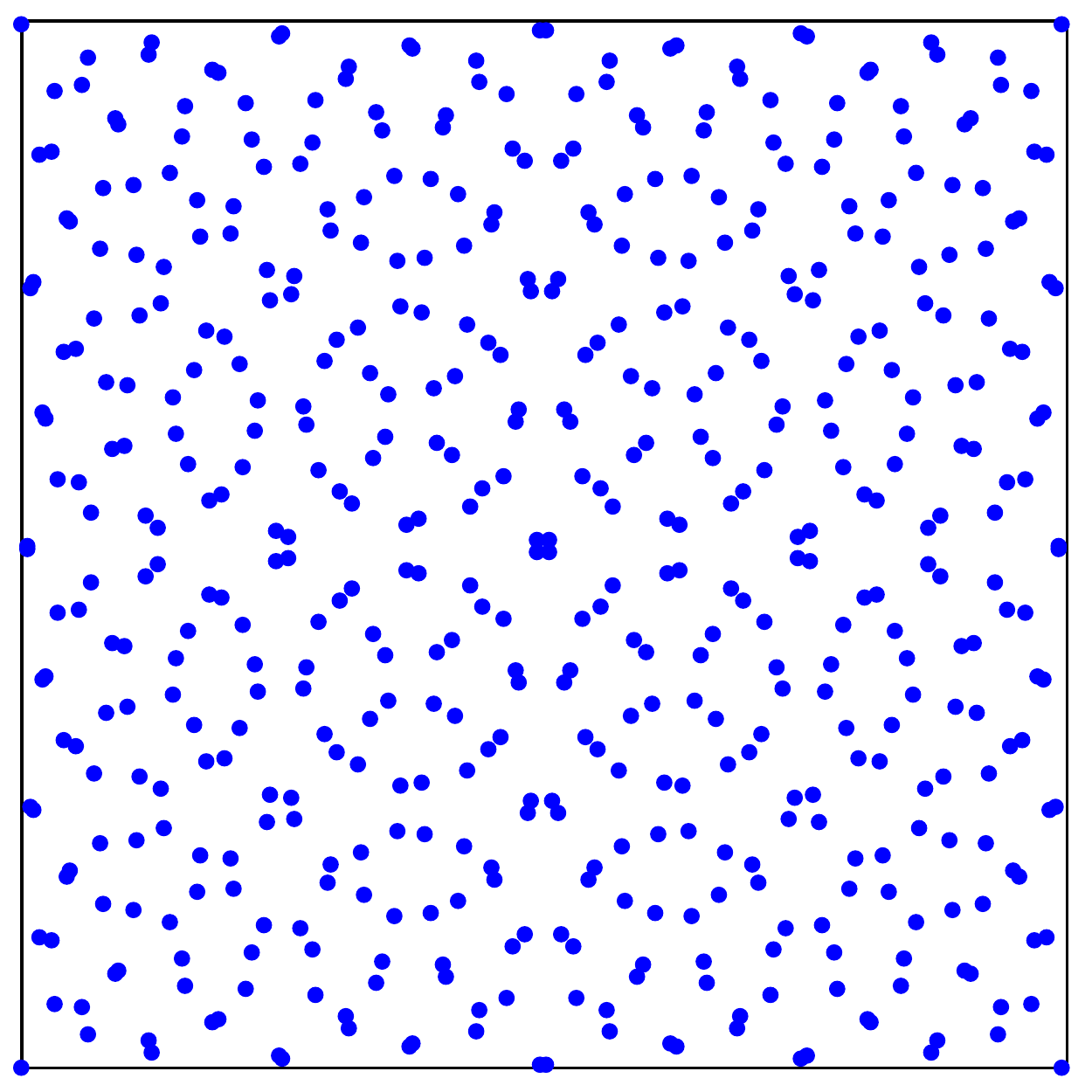}}
     \caption{Two-dimensional Hammersley point set ${\cal R}_{8,{\boldsymbol 0}}$ with $2^8$ elements (left) and symmetrized version ${\cal R}_{8,{\boldsymbol 0}}^{{\rm sym}}$ thereof (right)}
     \label{f18}
\end{figure}

\begin{theorem}\label{thm2}
   Let $p \in (1,\infty)$. Independently of $\bssigma \in \{0,1\}^n$ the two-dimensional symmetrized digit shifted Hammersley point set satisfies
     \[L_{p,N}({\cal R}_{n,\bssigma}^{{\rm sym}})  \ll_p \frac{\sqrt{\log{N}}}{N}.\]
\end{theorem}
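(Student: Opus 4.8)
The plan is to follow the same Littlewood-Paley strategy used in the proof of Theorem~\ref{thm1}, but now exploiting cancellation between the two digit-shifted copies making up ${\cal R}_{n,\bssigma}^{{\rm sym}}$. First I would record the Haar coefficients of the local discrepancy of ${\cal R}_{n,\bssigma}^{{\rm sym}}$. Since $A_N([\bszero,\bst),{\cal R}_{n,\bssigma}^{{\rm sym}}) = A_{2^n}([\bszero,\bst),{\cal R}_{n,\bssigma}) + A_{2^n}([\bszero,\bst),{\cal R}_{n,\bssigma_\ast})$ and $N = 2^{n+1}$, one checks that
\[
D_N({\cal R}_{n,\bssigma}^{{\rm sym}},\bst) = \tfrac12\left(D_{2^n}({\cal R}_{n,\bssigma},\bst) + D_{2^n}({\cal R}_{n,\bssigma_\ast},\bst)\right),
\]
so the Haar coefficient $\mu_{\bsj,\bsm}^{{\rm sym}}$ is $\tfrac12(\mu_{\bsj,\bsm}^{\bssigma} + \mu_{\bsj,\bsm}^{\bssigma_\ast})$, where the two summands are given by Lemma~\ref{lem:HCH} applied with shift $\bssigma$ and with shift $\bssigma_\ast$ respectively. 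The point of symmetrization is that $a_n(\bssigma_\ast) = n - a_n(\bssigma)$, so in the troublesome coefficient (vi) the leading term $2^{-(n+3)}(2a_n+4-n)$ of the $\bssigma$-copy and the leading term $2^{-(n+3)}(2(n-a_n)+4-n)$ of the $\bssigma_\ast$-copy combine to $2^{-(n+3)}\cdot 8 = 2^{-n}$ after halving, killing the dangerous $n$-dependence regardless of $\bssigma$. Thus $|\mu_{(-1,-1),(0,0)}^{{\rm sym}}| \ll 2^{-n}$ unconditionally.

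Next I would run exactly the same chain of inequalities as in the proof of Theorem~\ref{thm1}: apply Lemma~\ref{lpi} with $f = D_N({\cal R}_{n,\bssigma}^{{\rm sym}},\cdot)$, pass to the square function, use Minkowski's inequality for the $L_{p/2}$-norm to pull the sum over $\bsj$ outside, and reduce to showing
\[
\sum_{\bsj \in \NN_{-1}^2} 2^{2|\bsj|}\left\| \sum_{\bsm \in \mathbb{D}_{\bsj}} (\mu_{\bsj,\bsm}^{{\rm sym}})^2\, {\mathbf 1}_{I_{\bsj,\bsm}}\right\|_{L_{p/2}} \ll \frac{n}{2^{2n}}.
\]
For each of the eight ranges of $\bsj$ treated in Theorem~\ref{thm1}, the bound $|\mu_{\bsj,\bsm}^{{\rm sym}}| \le \tfrac12(|\mu_{\bsj,\bsm}^{\bssigma}| + |\mu_{\bsj,\bsm}^{\bssigma_\ast}|)$ together with the estimates of Lemma~\ref{lem:HCH} — which are uniform in the shift — gives exactly the same bounds on $(\mu_{\bsj,\bsm}^{{\rm sym}})^2$ as were used before (up to harmless constant factors), so those contributions are all $\ll n/2^{2n}$ verbatim. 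The only range that behaved differently in Theorem~\ref{thm1}, namely $\bsj = (-1,-1)$, is now handled by the unconditional bound $|\mu_{(-1,-1),(0,0)}^{{\rm sym}}| \ll 2^{-n}$ from the previous paragraph, which contributes $\ll 2^{-2n} \le n/2^{2n}$. Summing over the (polynomially many, in $n$, relevant) values of $\bsj$ completes the estimate.

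Strictly speaking I should be slightly careful in cases (ii) and (iv)/(v) where Lemma~\ref{lem:HCH} gives bounds with an exceptional set of at most $2^n$ coefficients: squaring $|\mu_{\bsj,\bsm}^{{\rm sym}}| \le \tfrac12(|\mu_{\bsj,\bsm}^{\bssigma}| + |\mu_{\bsj,\bsm}^{\bssigma_\ast}|)$ and using $(a+b)^2 \le 2(a^2+b^2)$ means the "exceptional" boxes for the symmetrized set are contained in the union of the two exceptional sets, hence number at most $2\cdot 2^n$, and the set $A_{\bsj}$ on which the indicator sum is supported has measure at most $2\cdot 2^{n-|\bsj|}$; these extra factors of $2$ are absorbed into the implied constants. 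The main (and really the only) obstacle is the bookkeeping at $\bsj=(-1,-1)$: one must verify that the $2^{-2(n+1)}$ additive terms in (vi) for the two shifts, which do not cancel, are themselves $\ll 2^{-n}$, which is immediate since $2^{-2(n+1)} \le 2^{-n}$. Everything else is a transcription of the proof of Theorem~\ref{thm1} with the observation that all bounds in Lemma~\ref{lem:HCH} except (vi) are independent of $\bssigma$, and (vi) is tamed by the choice $\bssigma_\ast = \bssigma \oplus \mathbf1$.
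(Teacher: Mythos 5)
Your proposal is correct and follows essentially the same route as the paper: the paper's Lemma~\ref{lem:HCSYM} establishes exactly the identity $\mu_{\bsj,\bsm}^{{\rm sym}}=\tfrac12(\mu_{\bsj,\bsm,\bssigma}+\mu_{\bsj,\bsm,\bssigma_\ast})$, uses the triangle inequality together with the shift-independence of the bounds in Lemma~\ref{lem:HCH} for $\bsj\neq(-1,-1)$, and exploits the cancellation $a_n(\bssigma_\ast)=n-a_n(\bssigma)$ to get $\mu_{(-1,-1),(0,0)}^{{\rm sym}}=2^{-(n+1)}+2^{-2(n+1)}$, after which the Littlewood--Paley argument of Theorem~\ref{thm1} is rerun verbatim. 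Your extra remark about the exceptional sets in case \textit{(ii)} doubling to at most $2\cdot 2^n$ boxes is a point the paper glosses over, and is a correct and welcome clarification.
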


For the proof we need upper bounds on the absolute values of the Haar coefficients $\mu_{\bsj,\bsm}^{{\rm sym}}= \langle  D_N({\cal R}_{n,\bssigma}^{{\rm sym}}, \, \cdot \,), h_{\bsj,\bsm} \rangle$ of the local discrepancy of ${\cal R}_{n,\bssigma}^{{\rm sym}}$ which are given in the following:

\begin{lemma} \label{lem:HCSYM} Let $\bsj=(j_1,j_2)\in \NN_{-1}^2$. Then in the case $\bsj\neq (-1,-1)$ we have $$|\mu_{\bsj,\bsm}^{{\rm sym}}| \le |\mu_{\bsj,\bsm}|\ \ \ \mbox{ for all } \ \bsm \in \mathbb{D}_{\bsj}.$$ Hence the results in Lemma~\ref{lem:HCH} apply accordingly also to $|\mu_{\bsj,\bsm}^{{\rm sym}}|$. In the case $\bsj= (-1,-1)$
we have $\mu_{(-1,-1),(0,0)}^{{\rm sym}} = 2^{-(n+1)}+2^{-2(n+1)}$. 
\end{lemma}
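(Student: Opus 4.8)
The plan is to deduce Lemma~\ref{lem:HCSYM} directly from Lemma~\ref{lem:HCH}, using that ${\cal R}_{n,\bssigma}^{{\rm sym}}$ is built from two copies of the object treated there. Since ${\cal R}_{n,\bssigma}^{{\rm sym}}={\cal R}_{n,\bssigma}\cup{\cal R}_{n,\bssigma_{\ast}}$ is a \emph{disjoint} union of two digit shifted Hammersley point sets, each with $2^n$ points, and $N=2^{n+1}$, the counting function of the union is the sum of the two individual counting functions, whence
\[D_N({\cal R}_{n,\bssigma}^{{\rm sym}},\bst)=\tfrac12\,D_{2^n}({\cal R}_{n,\bssigma},\bst)+\tfrac12\,D_{2^n}({\cal R}_{n,\bssigma_{\ast}},\bst).\]
By linearity of $f\mapsto\langle f,h_{\bsj,\bsm}\rangle$ this yields $\mu_{\bsj,\bsm}^{{\rm sym}}=\tfrac12\mu_{\bsj,\bsm}+\tfrac12\mu_{\bsj,\bsm}^{\ast}$ for all $\bsj\in\NN_{-1}^2$ and $\bsm\in{\mathbb D}_{\bsj}$, where $\mu_{\bsj,\bsm}$ is the coefficient of Lemma~\ref{lem:HCH} and $\mu_{\bsj,\bsm}^{\ast}$ is the analogous Haar coefficient for ${\cal R}_{n,\bssigma_{\ast}}$. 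The structural point that makes this useful is that ${\cal R}_{n,\bssigma_{\ast}}$ is itself a digit shifted Hammersley point set, with shift $\bssigma_{\ast}=\bssigma\oplus\mathbf 1$, so Lemma~\ref{lem:HCH} applies to $\mu_{\bsj,\bsm}^{\ast}$ verbatim, with $a_n$ replaced by $n-a_n$.

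For $\bsj\neq(-1,-1)$ I would then use that parts (i)--(v) of Lemma~\ref{lem:HCH} bound (or evaluate) $|\mu_{\bsj,\bsm}|$ by a quantity depending only on $n$ and on $\bsj$, never on the shift. Hence $|\mu_{\bsj,\bsm}^{\ast}|$ satisfies exactly the same bound as $|\mu_{\bsj,\bsm}|$, and the triangle inequality gives
\[|\mu_{\bsj,\bsm}^{{\rm sym}}|\le\tfrac12\bigl(|\mu_{\bsj,\bsm}|+|\mu_{\bsj,\bsm}^{\ast}|\bigr)\le|\mu_{\bsj,\bsm}|,\]
which is the assertion of the lemma. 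The only care needed is in part (ii): the set of $\bsm$ for which $|\mu_{\bsj,\bsm}^{{\rm sym}}|$ might fail to be bounded by the sharp value $2^{-2(j_1+j_2+2)}$ lies inside the union of the two exceptional sets, for ${\cal R}_{n,\bssigma}$ and for ${\cal R}_{n,\bssigma_{\ast}}$, hence still has at most $2^{n+1}$ elements --- exactly what the proof of Theorem~\ref{thm1} requires when this lemma is invoked.

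The case $\bsj=(-1,-1)$ is the only one in which the shift really enters Lemma~\ref{lem:HCH}, and here the symmetrization does precisely what it is designed to do. By part (vi), $\mu_{(-1,-1),(0,0)}=2^{-(n+3)}(2a_n+4-n)+2^{-2(n+1)}$ and $\mu_{(-1,-1),(0,0)}^{\ast}=2^{-(n+3)}(2(n-a_n)+4-n)+2^{-2(n+1)}$. Adding, the $a_n$-dependence cancels, since $(2a_n+4-n)+(2(n-a_n)+4-n)=8$, so $\mu_{(-1,-1),(0,0)}+\mu_{(-1,-1),(0,0)}^{\ast}=8\cdot 2^{-(n+3)}+2\cdot 2^{-2(n+1)}=2^{-n}+2^{-2n-1}$ and therefore $\mu_{(-1,-1),(0,0)}^{{\rm sym}}=2^{-(n+1)}+2^{-2(n+1)}$, as claimed.

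I do not expect a real obstacle: once the decomposition is in place the proof is bookkeeping. What has to be recognized is (a) that the estimates of Lemma~\ref{lem:HCH} are shift invariant off $\bsj=(-1,-1)$, so that averaging over $\bssigma$ and $\bssigma_{\ast}$ is cost free, and (b) that at $\bsj=(-1,-1)$ the quantity $2a_n-n$ --- the very one responsible for the non-optimal $L_p$-discrepancy of the plain Hammersley point set --- changes sign under the symmetrizing digit flip and is hence killed in the average. The one slightly delicate point is the exceptional-set count in part~(ii), together with the (stated but needed) disjointness of the union that guarantees $N=2^{n+1}$ and hence the $\tfrac12+\tfrac12$ splitting.
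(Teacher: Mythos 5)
Your proposal is correct and follows essentially the same route as the paper: both write $D_N({\cal R}_{n,\bssigma}^{{\rm sym}},\cdot)=\tfrac12\bigl(D_N({\cal R}_{n,\bssigma},\cdot)+D_N({\cal R}_{n,\bssigma_{\ast}},\cdot)\bigr)$, use linearity to get $\mu_{\bsj,\bsm}^{{\rm sym}}=\tfrac12(\mu_{\bsj,\bsm,\bssigma}+\mu_{\bsj,\bsm,\bssigma_{\ast}})$, apply the triangle inequality together with the shift-independence of Lemma~\ref{lem:HCH}(i)--(v) for $\bsj\neq(-1,-1)$, and exploit the cancellation of the $a_n$-dependence under $a_n\mapsto n-a_n$ at $\bsj=(-1,-1)$. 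Your explicit remark about the exceptional set in part (ii) doubling to at most $2^{n+1}$ is a useful clarification that the paper leaves implicit, but it does not change the argument.
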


\begin{proof} We have
\begin{align*}
    D_N( {\cal R}_{n,\bssigma}^{{\rm sym}},\bst)=&\frac{1}{2^{n+1}}A_N(\left[0,\bst\right),{\cal R}_{n,\bssigma}^{{\rm sym}})-t_1t_2
     \\ =& \frac{1}{2}\left(\frac{1}{2^n}A_N(\left[0,\bst\right),{\cal R}_{n,\bssigma})-t_1t_2+\frac{1}{2^n}A_N(\left[0,\bst\right),{\cal R}_{n,\bssigma_{\ast}})-t_1t_2\right)\\ =& \frac{1}{2}\left( D_N( {\cal R}_{n,\bssigma},\bst)+ D_N( {\cal R}_{n,\bssigma_{\ast}},\bst)\right).
\end{align*}
Regarding the linearity of integration, we obtain \[ \mu_{\bsj,\bsm}^{{\rm sym}}=\frac{1}{2}\left(\mu_{\bsj,\bsm,\bssigma}+\mu_{\bsj,\bsm,\bssigma_{\ast}}\right),\] where here we write $\mu_{\bsj,\bsm,\bssigma}$ for the the Haar coefficients of the local discrepancy of ${\cal R}_{n,\bssigma}$ in order to stress the dependence on the digit shift $\bssigma$ and accordingly for $\mu_{\bsj,\bsm,\bssigma_{\ast}}$. Then the triangle inequality yields
\[ |\mu_{\bsj,\bsm}^{{\rm sym}}|\leq \frac{1}{2}\left(|\mu_{\bsj,\bsm,\bssigma}|+|\mu_{\bsj,\bsm,\bssigma_{\ast}}|\right). \]
We analyze the case $\bsj\neq (-1,-1)$. We note that the identities and upper bounds for $|\mu_{\bsj,\bsm,\bssigma}|$ in Lemma~\ref{lem:HCH} do not depend on the shift $\bssigma$ and therefore we get our desired results in this case directly from this lemma. In case that $\bsj=(-1,-1)$ we observe that the shift $\bssigma_{\ast}$ has $n-a$ zero entries if $\bssigma$ has $a$ zero entries, and thus the result in this case also follows immediately from Lemma~\ref{lem:HCH}. \end{proof}

\noindent{\it Proof of Theorem~\ref{thm2}.} Since the absolute values of the Haar coefficients of $D_N({\cal R}_{n,\bssigma}^{{\rm sym}},\cdot)$ are less than or equal to the absolute values of the Haar coefficients of $D_N({\cal R}_{n,\bssigma},\cdot)$ and since $\mu_{(-1,-1),(0,0)}^{{\rm sym}}$ is of order $2^{-2n}$, the proof of this theorem follows exactly the same lines as the proof of Theorem~\ref{thm1}.$\qed$

Finally we consider a slight variant of the two-dimensional symmetrized shifted Hammersley point set \eqref{shHam}.
Let \[\widetilde{\cal R}_{n,\bssigma}^{{\rm sym}}:={\cal R}_{n,\bssigma} \cup \left\{(x,1-y):(x,y)\in {\cal R}_{n,\bssigma}\right\},\]
where it might happen that two points coincide. The number of elements of $\widetilde{\cal R}_{n,\bssigma}^{{\rm sym}}$, counted by multiplicity, is again $N=2^{n+1}$. It follows from \cite[Theorem~2]{lp}, that the $L_2$-discrepancy of $\widetilde{\cal R}_{n,\bszero}^{{\rm sym}}$ (unshifted) is of optimal order $L_{2,N}(\widetilde{\cal R}_{n,\bszero}^{{\rm sym}}) \ll \sqrt{\log N}/N$. We extend this result to the $L_p$-discrepancy.

\begin{theorem}\label{thm3}
   Let $p \in (1,\infty)$. Independently of $\bssigma \in \{0,1\}^n$ we have
     \[L_{p,N}(\widetilde{{\cal R}}_{n,\bssigma}^{{\rm sym}})  \ll_p \frac{\sqrt{\log{N}}}{N}.\]
\end{theorem}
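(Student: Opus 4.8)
\textbf{Proof plan for Theorem~\ref{thm3}.}

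The plan is to mimic the proof of Theorem~\ref{thm1}, so the crux is to establish suitable bounds on the Haar coefficients $\widetilde\mu_{\bsj,\bsm}^{{\rm sym}}= \langle D_N(\widetilde{{\cal R}}_{n,\bssigma}^{{\rm sym}},\cdot), h_{\bsj,\bsm}\rangle$ and then feed them into the Littlewood--Paley machinery exactly as before. First I would record the decomposition of the local discrepancy. Writing $\bssigma' = (\sigma_1\oplus 1, \sigma_2, \sigma_3, \ldots, \sigma_n)$ (flipping only the first digit), one checks that the reflection $(x,y)\mapsto(x,1-y)$ maps ${\cal R}_{n,\bssigma}$ onto a point set whose $y$-coordinates differ from those of ${\cal R}_{n,\bssigma'}$ only in a controlled, lower-order way; more precisely, as in the remark preceding the theorem, $\{(x,1-y):(x,y)\in{\cal R}_{n,\bssigma}\}$ and ${\cal R}_{n,\bssigma_\ast}$ differ by the shift $1-1/2^n-y$ versus $1-y$, i.e.\ by a uniform translation by $2^{-n}$ in the second coordinate. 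Consequently $D_N(\widetilde{{\cal R}}_{n,\bssigma}^{{\rm sym}},\bst) = \tfrac12\big(D_N({\cal R}_{n,\bssigma},\bst) + D_N({\cal R}_{n,\bssigma_\ast},(t_1,1-1/2^n) \ominus \ldots )\big)$ — the precise bookkeeping of the $2^{-n}$ translation is the one genuinely new computation. The cleanest route is probably: either (a) directly recompute the Haar coefficients of $D_N(\widetilde{{\cal R}}_{n,\bssigma}^{{\rm sym}},\cdot)$ from scratch along the lines of \cite[Theorem~3.1]{hin2010}, or (b) relate $\widetilde{{\cal R}}_{n,\bssigma}^{{\rm sym}}$ to ${\cal R}_{n,\bssigma}^{{\rm sym}}$ by noting that the two point sets differ only by the $2^{-n}$-translation of one half and argue that this perturbation changes each Haar coefficient by at most $O(2^{-n}\cdot 2^{-|\bsj|})$ or so, which is harmless.

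The key steps, in order, would be: (1) Prove the analogue of Lemma~\ref{lem:HCSYM}, namely that for $\bsj\neq(-1,-1)$ one has $|\widetilde\mu_{\bsj,\bsm}^{{\rm sym}}| \ll |\mu_{\bsj,\bsm}|$ (with an absolute implied constant, uniformly in $\bssigma$), so that all the bounds of Lemma~\ref{lem:HCH} apply up to a constant; and that $\widetilde\mu_{(-1,-1),(0,0)}^{{\rm sym}}$ is of order $2^{-2n}$ regardless of $\bssigma$. For the first part, the linearity $\widetilde\mu_{\bsj,\bsm}^{{\rm sym}} = \tfrac12(\mu_{\bsj,\bsm,\bssigma} + \nu_{\bsj,\bsm,\bssigma})$, where $\nu$ denotes the Haar coefficient of the discrepancy of the reflected-and-translated copy, reduces matters to bounding $\nu_{\bsj,\bsm,\bssigma}$. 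Here I would use that the reflection map is measure-preserving and that integrating $h_{j_2,m_2}(1-s)$ against a function essentially flips the sign and reindexes $m_2 \mapsto 2^{j_2}-1-m_2$, while the extra translation by $2^{-n}$ only affects boxes of fine scale $j_2 \ge n$ (where the coefficients are anyway the small ``trivial'' ones $2^{-2(j_1+j_2+2)}$) or contributes an $O(2^{-n})$-measure error set at coarse scales — in all cases dominated by the corresponding bound in Lemma~\ref{lem:HCH}. (2) For $\bsj=(-1,-1)$, compute $\widetilde\mu_{(-1,-1),(0,0)}^{{\rm sym}}$ directly: it equals the average of the two ``volume-type'' constants, and because reflection sends the $a_n$-dependent term $2a_n+4-n$ to its negative (up to the $2^{-n}$-shift correction), the two contributions nearly cancel, leaving something of size $O(n\,2^{-2n})$ — in fact of size $O(2^{-2n})$ — independent of $\bssigma$. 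This is exactly the mechanism that makes symmetrization work, and it is why no balance condition on $a_n$ is needed. (3) Insert these bounds into the square-function estimate: run verbatim the chain of inequalities from the proof of Theorem~\ref{thm1} (Littlewood--Paley, Minkowski for $L_{p/2}$, splitting the sum over $\bsj$ into the same eight ranges), obtaining $L_{p,N}(\widetilde{{\cal R}}_{n,\bssigma}^{{\rm sym}}) \ll_p \sqrt{n}/2^n = \sqrt{\log N}/N$ since $N=2^{n+1}$.

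I expect the main obstacle to be step~(1)/(2): carefully tracking the effect of the reflection together with the $2^{-n}$-translation on the Haar coefficients, particularly confirming that the translation genuinely only perturbs the ``already small'' coefficients and the coarse-scale ones by a negligible amount, so that the clean domination $|\widetilde\mu_{\bsj,\bsm}^{{\rm sym}}| \ll |\mu_{\bsj,\bsm}|$ survives uniformly in $\bssigma$ and in the scale $\bsj$. One has to be a little careful that the exceptional $\le 2^n$ coefficients in case~(ii) of Lemma~\ref{lem:HCH} (the boxes containing no point) may be different boxes for the reflected copy, but since there are at most $2^n$ of them on each side the union is still $\le 2\cdot 2^n = O(2^n)$, which does not hurt the counting argument. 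Once the Haar coefficient bounds are in hand, everything else is a routine repetition of the Theorem~\ref{thm1} computation, and the absence of a condition on $a_n$ is simply the statement that the only potentially-large coefficient, the one at $\bsj=(-1,-1)$, has been forced to be small by the reflection.
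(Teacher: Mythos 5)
Your plan would very likely work, but it takes a genuinely different and much heavier route than the paper, and it leaves precisely the hardest part (your step (1)) as an assertion. The paper does not touch the Haar coefficients of $\widetilde{\cal R}_{n,\bssigma}^{{\rm sym}}$ at all: it deduces Theorem~\ref{thm3} from Theorem~\ref{thm2} via a one-line perturbation lemma, namely $|L_{p,N}(\widetilde{\cal R}_{n,\bssigma}^{{\rm sym}})-L_{p,N}({\cal R}_{n,\bssigma}^{{\rm sym}})|\le 1/N$. The point is exactly the observation you make in passing: $\widetilde{\cal R}_{n,\bssigma}^{{\rm sym}}$ is obtained from ${\cal R}_{n,\bssigma}^{{\rm sym}}$ by translating one half of the points by $2^{-n}$ in the $y$-direction, and since the $y$-coordinates of the translated points are pairwise at least $2^{-n}$ apart, the counting function $A([\bszero,\bst),\cdot)$ of \emph{any} box changes by at most $1$. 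Hence $|D_N({\cal R}_{n,\bssigma}^{{\rm sym}},\bst)-D_N(\widetilde{\cal R}_{n,\bssigma}^{{\rm sym}},\bst)|\le 2^{-(n+1)}=1/N$ pointwise, and the triangle inequality in $L_p$ gives the lemma; since $1/N\ll \sqrt{\log N}/N$, Theorem~\ref{thm2} transfers immediately. You already hold the key ingredient (the $2^{-n}$ spacing) but deploy it only inside the Haar-coefficient machinery, where it becomes delicate: in particular, the coefficient-wise domination $|\widetilde\mu_{\bsj,\bsm}^{{\rm sym}}|\ll|\mu_{\bsj,\bsm}|$ you posit is not the right statement in case \textit{(ii)} of Lemma~\ref{lem:HCH} (a box empty for one set but not the other can have coefficients of genuinely different sizes, so only the \emph{bounds} of the lemma, with a possibly enlarged exceptional family of $O(2^n)$ boxes, can be expected to carry over), and verifying all cases would essentially amount to redoing the computation of \cite[Theorem~3.1]{hin2010} for a non-net point set. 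So: your approach is salvageable but should be abandoned in favour of the pointwise comparison, which reduces the whole theorem to Theorem~\ref{thm2} in a few lines. (A minor further point: the coefficient $\widetilde\mu_{(-1,-1),(0,0)}^{{\rm sym}}$ is of order $2^{-n}$, not $2^{-2n}$; this is still amply small enough, since all that is needed is $O(\sqrt{n}\,2^{-n})$.)
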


Theorem~\ref{thm3} follows directly from Theorem~\ref{thm2} in conjunction with the following lemma.

\begin{lemma}
We have 
\begin{equation*} 
|L_{p,N}(\widetilde{\cal R}_{n,\bssigma}^{{\rm sym}})-L_{p,N}({\cal R}_{n,\bssigma}^{{\rm sym}})|\leq \frac{1}{2^{n+1}}=\frac{1}{N}. 
\end{equation*}
\end{lemma}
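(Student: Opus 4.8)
The plan is to compare the two point sets $\widetilde{\cal R}_{n,\bssigma}^{{\rm sym}}$ and ${\cal R}_{n,\bssigma}^{{\rm sym}}$ directly at the level of the local discrepancy function and then use the triangle inequality for the $L_p$-norm. Both sets contain ${\cal R}_{n,\bssigma}$ as a sub-multiset and have the same cardinality $N=2^{n+1}$ (counted with multiplicity); the only difference is the ``reflected'' half: in ${\cal R}_{n,\bssigma}^{{\rm sym}}$ this half is $\{(x,1-2^{-n}-y):(x,y)\in{\cal R}_{n,\bssigma}\}$, whereas in $\widetilde{\cal R}_{n,\bssigma}^{{\rm sym}}$ it is $\{(x,1-y):(x,y)\in{\cal R}_{n,\bssigma}\}$. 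So the reflected points differ only by a vertical shift of $2^{-n}$ (in fact, since $y$ runs over multiples of $2^{-n}$ shifted by the digit-shift pattern, $1-y$ and $1-2^{-n}-y$ are just consecutive grid points, and a point at height $1$ should be read as $0$). First I would write, for $\bst=(t_1,t_2)$,
\[
D_N(\widetilde{\cal R}_{n,\bssigma}^{{\rm sym}},\bst)-D_N({\cal R}_{n,\bssigma}^{{\rm sym}},\bst)
=\frac{1}{N}\Big(A_N([\bszero,\bst),\widetilde{\cal R}_{n,\bssigma}^{{\rm sym}})-A_N([\bszero,\bst),{\cal R}_{n,\bssigma}^{{\rm sym}})\Big),
\]
since the volume term $t_1t_2$ cancels.

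The key step is to bound the counting difference uniformly in $\bst$. For each original point $(x,y)\in{\cal R}_{n,\bssigma}$, the pair of reflected points $(x,1-y)$ and $(x,1-2^{-n}-y)$ lie in a common half-open vertical strip $\{x\}\times[\ell 2^{-n},(\ell+1)2^{-n})$-type interval only one apart; a box $[\bszero,\bst)$ can contain one of the two reflected images but not the other only when $t_1>x$ and $t_2$ falls in the interval between the two reflected heights, i.e. in an interval of length $2^{-n}$. Crucially, for a fixed value of $t_2$, the heights $1-y$ for $(x,y)\in{\cal R}_{n,\bssigma}$ are $2^n$ distinct points forming (after reflection) a single coordinate grid, so at most one original point $(x,y)$ has its reflected height in the relevant length-$2^{-n}$ window; hence for every $\bst$ the counts $A_N(\cdot)$ for the two sets differ by at most $1$. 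Therefore
\[
|D_N(\widetilde{\cal R}_{n,\bssigma}^{{\rm sym}},\bst)-D_N({\cal R}_{n,\bssigma}^{{\rm sym}},\bst)|\le\frac{1}{N}=\frac{1}{2^{n+1}}
\quad\text{for all }\bst\in[0,1]^2,
\]
and taking $L_p$-norms and applying the triangle inequality $|\,\|f\|_{L_p}-\|g\|_{L_p}\,|\le\|f-g\|_{L_p}\le\|f-g\|_{L_\infty}$ yields the claim.

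The main obstacle is making the ``at most one point'' argument fully rigorous, i.e. carefully checking that the two reflected copies of ${\cal R}_{n,\bssigma}$ occupy the same set of second coordinates up to the $2^{-n}$ shift and that no box $[\bszero,\bst)$ can separate more than one such pair. One must handle the boundary subtlety that a reflected point may have second coordinate equal to $1$ (which is then not in any $[\bszero,t_2)$ with $t_2\le 1$) and, conversely, that multiplicities in $\widetilde{\cal R}_{n,\bssigma}^{{\rm sym}}$ must be counted consistently on both sides — but since both sets have exactly $N$ points with multiplicity and share the common half ${\cal R}_{n,\bssigma}$, these bookkeeping issues only affect the reflected half and contribute at most the single unit already accounted for. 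Everything else is routine.
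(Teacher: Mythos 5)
Your proposal is correct and follows essentially the same route as the paper: both identify $\widetilde{\cal R}_{n,\bssigma}^{{\rm sym}}$ as arising from ${\cal R}_{n,\bssigma}^{{\rm sym}}$ by shifting the reflected half vertically by $2^{-n}$, use the $2^{-n}$-separation of the second coordinates to show the counting functions of the two sets differ by at most $1$ for every anchored box, and conclude with the reverse triangle inequality for the $L_p$-norm.
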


\begin{proof} At first we note that
\begin{equation}\label{abinq} 
A(\left[\bold 0,\bst\right), \widetilde{\cal R}_{n,\bssigma}^{{\rm sym}})\leq A(\left[\bold 0,\bst\right), {\cal R}_{n,\bssigma}^{{\rm sym}}) \leq A(\left[\bold 0,\bst\right), \widetilde{\cal R}_{n,\bssigma}^{{\rm sym}})+1. 
\end{equation}
For the proof of this claim we consider an arbitrary interval $\left[\bold 0, \bst \right) \subseteq \left[0,1\right]^2$. It is evident that the point set $\widetilde{\cal R}_{n,\bssigma}^{{\rm sym}}$ results from ${\cal R}_{n,\bssigma}^{{\rm sym}}$ if the points in \[\left\{(x,1-1/2^n-y):(x,y)\in {\cal R}_{n,\bssigma}\right\}\] are shifted $1/2^n$ in the positive $y$-direction and the remaining points (which are the elements of ${\cal R}_{n,\bssigma}$) do not move. Since the $y$-coordinates of two distinctive elements in $\left\{(x,1-1/2^n-y):(x,y)\in {\cal R}_{n,\bssigma}\right\}$ differ at least by $1/2^n$, there is at most one element in ${\cal R}_{n,\bssigma}^{{\rm sym}}$ that might leave the interval $\left[\bold 0, \bst \right) $ by shifting these points in the described way, whereas we cannot get additional points in this interval. From these observations the above inequalities \eqref{abinq} are clear. Therefore we obtain
 \[ |D_N({\cal R}_{n,\bssigma}^{{\rm sym}},\bst)-D_N(\widetilde {\cal R}_{n,\bssigma}^{{\rm sym}},\bst)| \leq \frac{1}{2^{n+1}}|A(\left[\bold 0,\bst\right), {\cal R}_{n,\bssigma}^{{\rm sym}})-A(\left[\bold 0,\bst\right), \widetilde{\cal R}_{n,\bssigma}^{{\rm sym}})|\leq \frac{1}{2^{n+1}}. \]
From $||x|-|y||\leq |x-y|$ we get
\[  \left||D_N({\cal R}_{n,\bssigma}^{{\rm sym}},\bst)|-|D_N(\widetilde{\cal R}_{n,\bssigma}^{{\rm sym}},\bst)|\right|\leq \frac {1}{2^{n+1}}.\]
Hence we have
\begin{equation} \label{Inequ1} |D_N({\cal R}_{n,\bssigma}^{{\rm sym}},\bst)| \leq |D_N(\widetilde{\cal R}_{n,\bssigma}^{{\rm sym}},\bst)| +\frac{1}{2^{n+1}}  \end{equation}
and
\begin{equation} \label{Inequ2} |D_N(\widetilde{\cal R}_{n,\bssigma}^{{\rm sym}},\bst)| \leq |D_N( {\cal R}_{n,\bssigma}^{{\rm sym}},\bst)| +\frac{1}{2^{n+1}}.  \end{equation}
Now we take the $L_p$-norm on both sides of inequality~\eqref{Inequ1} and get by regarding the triangle inequality
\begin{align*} 
L_{p,N}({\cal R}_{n,\bssigma}^{{\rm sym}}) & =\left\| D_N( {\cal R}_{n,\bssigma}^{{\rm sym}},\bst) \right\|_{L_{p}} \\
&\leq \|D_N(\widetilde {\cal R}_{n,\bssigma}^{{\rm sym}},\bst)\|_{L_{p}} +\left\|\frac{1}{2^{n+1}}\right\|_{L_p}\\
& =L_{p,N}(\widetilde{\cal R}_{n,\bssigma}^{{\rm sym}})+\frac{1}{2^{n+1}}. \end{align*}
From inequality~\eqref{Inequ2} we derive in an analogue way \[ L_{p,N}(\widetilde{\cal R}_{n,\bssigma}^{{\rm sym}}) \leq L_{p,N}({ \cal R}_{n,\bssigma}^{{\rm sym}})+\frac{1}{2^{n+1}} \] which finally yields the desired result.
\end{proof}

\section{Final remarks}

We have shown two modifications of the classical Hammersley point sets, the digit shifts and the symmetrization, which achieve the optimal order of $L_p$-discrepancy for arbitrary $p \in (1,\infty)$. It should be pointed out that each construction works for all $p \in (1,\infty)$ simultaneously. This is in contrast to the construction of Skriganov~\cite{Skr} where the point sets differ from $p$ to $p$ (but of course the outstanding achievement of Skriganov is that his construction works for arbitrary dimension $s$). Also the point sets constructed in \cite{D14} yield optimal order of $L_p$-discrepancy for all $p \in (1,\infty)$ simultaneously.\\

Finally it should be remarked that recently Goda \cite{goda} presented another modification of two-dimensional Hammersley point sets (in arbitrary base $b$) with optimal order of $L_p$-discrepancy. He considered so-called {\it two-dimensional folded Hammersley point sets} which result from the application of the so-called tent (or bakers) transformation to the elements of the two-dimensional Hammersley point set. In base $2$ this is the point set $${\cal R}_n^{\phi}=\{(\phi(x),\phi(y)) \ : \ (x,y) \in {\cal R}_n\}$$ where $\phi(x)=1-|2x-1|$. Goda showed that $$L_{p,N}({\cal R}_n^{\phi}) \ll_p \frac{\sqrt{\log N}}{N} \ \ \ \mbox{ for all }\ p \in (1,\infty).$$

\noindent{\bf Author's Addresses:}\\

\noindent Aicke Hinrichs, Institut f\"{u}r Analysis, Johannes Kepler Universit\"{a}t Linz, Altenbergerstra{\ss}e 69, A-4040 Linz, Austria. Email: aicke.hinrichs(at)jku.at\\

\noindent Ralph Kritzinger and Friedrich Pillichshammer, Institut f\"{u}r Finanzmathematik, Johannes Kepler Universit\"{a}t Linz, Altenbergerstra{\ss}e 69, A-4040 Linz, Austria. Email: ralph.kritzinger(at)jku.at, friedrich.pillichshammer(at)jku.at

\end{document}